\newtheorem{theorem}{Theorem}[section]
\newtheorem{corollary}[theorem]{Corollary}
\newtheorem{lemma}[theorem]{Lemma}
\numberwithin{equation}{section}
\def\C{{\mathbb C}}
\def\N{{\mathbb N}}
\def\E{{\mathbb E}}
\def\P{{\mathbb P}}
\def\eps{\varepsilon}
\begin{document}

\title{Natural boundary and Zero distribution of random polynomials in smooth domains}

\author{Igor Pritsker and Koushik Ramachandran}

\date{}

\maketitle

\begin{abstract}
We consider the zero distribution of random polynomials of the form $P_n(z) = \sum_{k=0}^n a_k B_k(z)$, where $\{a_k\}_{k=0}^{\infty}$ are non-trivial i.i.d. complex random variables with mean $0$ and finite variance. Polynomials $\{B_k\}_{k=0}^{\infty}$ are selected from a standard basis such as Szeg\H{o}, Bergman, or Faber polynomials associated with a Jordan domain $G$ whose boundary is $C^{2, \alpha}$ smooth. We show that the zero counting measures of $P_n$ converge almost surely to the equilibrium measure on the boundary of $G$. We also show that if $\{a_k\}_{k=0}^{\infty}$ are i.i.d. random variables, and the domain $G$ has analytic boundary, then for a random series of the form $f(z) =\sum_{k=0}^{\infty}a_k B_k(z),$ $\partial{G}$ is almost surely a natural boundary for $f(z).$
\end{abstract}

\section{Introduction}
This work is a sequel to \cite{PR} where we showed that zeros of a sequence of random polynomials $\{P_n\}_{n}$ (spanned by an appropriate basis) associated to a Jordan domain $G$ with analytic boundary $L,$ \emph{equidistribute} near $L,$ i.e. distribute according to the equilibrium measure of $L.$ We refer the reader to \cite{PR} for references to the literature on random polynomials. In this note, we extend the above result to Jordan domains with lesser regularity, namely domains with $C^{2, \alpha}$ boundary, see Theorem \ref{P} below.

\vspace{0.1in}

\noindent  In order to state our results we need to set up some notation. Let $G\subset{\C}$ be a Jordan domain. We set $\Omega=\overline{\C} \setminus\overline{G},$ the exterior of $\overline{G}$ and $\Delta$ the exterior of the closed unit disc. By the Riemann mapping theorem there is a unique conformal mapping $\Phi:\Omega\to\Delta,\ \Phi(\infty)=\infty,\ \Phi'(\infty)>0.$ We denote the equilibrium measure of $E = \overline{G}$ by $\mu_E.$ For a polynomial $P_n$ of degree $n,$ with zeros at $\{Z_{k,n}\}_{k=1}^n,$ let $\tau_n = \dfrac{1}{n}\sum_{k =1}^{n}\delta_{Z_{k,n}}$ denote its normalized zero counting measure. For a sequence of positive measures $\{\mu_n\}_{n=1}^{\infty},$ we write $\mu_n \stackrel{w}{\rightarrow}\mu$ to denote weak convergence of these measures to $\mu.$ A random variable $X$ is called non-trivial if $\P(X=0)<1$.

\begin{theorem}\label{P}
Let $G$ be a Jordain domain in $\mathbb{C}$ whose boundary $L$ is $C^{2, \alpha}$ smooth for some $0 < \alpha < 1.$ Consider a sequence of random polynomials $\{P_n\}_{n=0}^{\infty}$ defined by $P_n(z) = \sum_{k=0}^{n}a_kB_k(z),$ where the $\{a_i\}_{i=0}^{\infty}$ are non-trivial i.i.d. random variables with mean $0$ and finite variance, with the basis $\{B_n\}_{n=0}^{\infty}$ being given by either by Szeg\H{o}, or by Bergman, or by Faber polynomials. Then, $\tau_n \stackrel{w}{\rightarrow} \mu_E$ a.s.
\end{theorem}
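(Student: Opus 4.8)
The plan is to recast the almost-sure weak convergence $\tau_n\stackrel{w}{\to}\mu_E$ as an almost-sure convergence of subharmonic functions, and to prove the latter by decoupling the deterministic asymptotics of the basis from the probabilistic size of the coefficients. I set $u_n(z)=\frac1n\log|P_n(z)|$, which is subharmonic with Riesz measure (normalized distributional Laplacian) equal to $\tau_n$. Writing $g_\Omega(z,\infty)=\log|\Phi(z)|$ for the Green function of $\Omega$ with pole at infinity, I define $U(z)=g_\Omega(z,\infty)$ for $z\in\Omega$ and $U(z)=0$ for $z\in\overline{G}$; since $g_\Omega$ vanishes on $L$, the function $U$ is continuous and subharmonic, and its Riesz measure is exactly $\mu_E$. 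Because the passage from a subharmonic function to its Riesz measure is continuous from $L^1_{\mathrm{loc}}(\C)$ to the weak topology on measures, it suffices to show that $u_n\to U$ in $L^1_{\mathrm{loc}}(\C)$ almost surely.

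\noindent I would then assemble the two inputs. On the deterministic side, all three families share the same $n$-th root behaviour: under the $C^{2,\alpha}$ hypothesis on $L$ one has $\frac1k\log|B_k(z)|\to g_\Omega(z,\infty)$ locally uniformly on $\Omega$, together with the boundary estimate $\limsup_k\|B_k\|_{\overline{G}}^{1/k}\le 1$. It is precisely this boundary estimate that relies on the $C^{2,\alpha}$ smoothness, through the classical strong (Carleman/Suetin-type) asymptotics for Szeg\H{o} and Bergman polynomials and the uniform estimates for Faber polynomials on smooth domains. On the probabilistic side, finite variance gives $\E\log^+|a_0|<\infty$, so a Borel--Cantelli argument yields $\limsup_k|a_k|^{1/k}=1$ almost surely; the value $1$ uses non-triviality, which forces $|a_k|\ge\delta$ infinitely often for some $\delta>0$. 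I fix henceforth a realization in this full-measure event.

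\noindent The upper bound $\limsup_n u_n(z)\le U(z)$ then holds for every $z$: from $|P_n(z)|\le (n+1)\max_{k\le n}|a_k|\,|B_k(z)|$ and $g_\Omega>0$ on $\Omega$, the maximum is asymptotically carried by $k=n$, while on $\overline{G}$ the boundary estimate gives $\limsup_n u_n\le 0$. For a sharp pointwise lower bound on $\Omega$ I would use the identity $a_nB_n(z)=P_n(z)-P_{n-1}(z)$, so that $|a_n|\,|B_n(z)|\le|P_n(z)|+|P_{n-1}(z)|$. If $\frac1n\log|P_n(z)|$ and $\frac1n\log|P_{n-1}(z)|$ were both at most $g_\Omega(z,\infty)-\eps$ for all large $n$, then $\frac1n\log|a_n|\le-\eps+o(1)$, contradicting $\limsup_n\frac1n\log|a_n|=0$. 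Hence $\limsup_n u_n(z)=g_\Omega(z,\infty)=U(z)$ almost surely for each fixed $z\in\Omega$, and by Fubini, almost surely for Lebesgue-a.e.\ $z\in\Omega$.

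\noindent The step I expect to be the main obstacle is upgrading this one-sided, pointwise $\limsup$ information to full $L^1_{\mathrm{loc}}$ convergence of $u_n$ (equivalently, weak convergence of the full sequence $\tau_n$). The difficulty is genuine: along an index subsequence where the leading coefficients $a_{n_j}$ are atypically small, the top term ceases to dominate, and because $|B_k(z)|$ grows geometrically on $\Omega$ the later small-coefficient terms can amplify cancellation, so $\liminf_n u_n$ may dip below $U$ on thin exceptional sets. To close this I would argue through subsequential limits: first establish tightness of $\{\tau_n\}$ by showing, via top-term domination for $|z|$ large, that all zeros eventually lie in a fixed disc; then, for any weak-$*$ limit $\tau_*$ of a subsequence, combine the principle of descent and the lower envelope theorem (Saff--Totik) with the almost-sure identity $\limsup_k|a_k|^{1/k}=1$ to identify the logarithmic potential of $\tau_*$ with that of $\mu_E$ throughout $\Omega$. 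Unicity of logarithmic potentials then forces $\tau_*=\mu_E$; since every limit point coincides with $\mu_E$ and the family is tight, $\tau_n\stackrel{w}{\to}\mu_E$ almost surely, uniformly across the three bases. A final check, again from $\limsup_n u_n\le U$ and the harmonicity of $U$ in $\Omega$, confirms that no mass escapes to infinity or accumulates in the interior of $G$.
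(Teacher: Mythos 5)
Your reduction to $L^1_{\mathrm{loc}}$ convergence, the upper bound $\limsup_n u_n\le U$, and the pointwise identity $\limsup_n u_n(z)=g_\Omega(z,\infty)$ via $a_nB_n=P_n-P_{n-1}$ are all sound, and you correctly located the hard step. But the proposed completion has two genuine gaps. First, the ``unicity of logarithmic potentials'' step is not available: two probability measures whose potentials agree on $\Omega$ need \emph{not} coincide, because mass inside $G$ is invisible from outside (for the closed unit disk, normalized arc length on any circle $|z|=r<1$ has the same potential on $|z|>1$ as $\mu_E$). So even a successful identification on $\Omega$ leaves open the possibility that a subsequential limit carries mass in $G$, and your ``final check'' cannot close this: an upper bound on $|P_n|$ together with harmonicity of $U$ is perfectly compatible with zeros accumulating anywhere in $G$ (zeros only make $\log|P_n|$ smaller). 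Ruling out interior mass requires a \emph{lower} bound mechanism in $G$, and this is exactly where mean zero and finite variance enter the paper's proof: Lemma \ref{lem 3.4} (Hough--Krishnapur--Peres--Vir\'ag) gives $P_n\to f$ a.s.\ locally uniformly in $G$ with $f\not\equiv 0$, and Hurwitz's theorem then yields \eqref{3.5}. Your proposal uses finite variance only to get $\E\log^+|a_0|<\infty$, so this ingredient is absent.

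Second, the identification on $\Omega$ along \emph{arbitrary} subsequences cannot be run with $\limsup_k|a_k|^{1/k}=1$ alone. Your pointwise result has the quantifier order ``for each $z$, there exist infinitely many good $n$,'' whereas controlling every subsequential limit needs ``for every large $n$, there exists $z\in S$ with $u_n(z)\ge g_\Omega(z,\infty)-\eps$'' (this is Grothmann's condition \eqref{3.6}); a sparse subsequence $n_j$ can avoid, for every fixed $z$, the good indices. Concretely, non-triviality permits $\P(a_0=0)>0$, so a.s.\ infinitely many leading coefficients vanish, and long runs of coefficients just below $n_j$ may be exponentially small; then your tightness claim fails (zeros do escape every fixed disc: $z^{n-1}+\eps z^n$ has a zero at $-1/\eps$), and the potential-theoretic identification degenerates because $\frac1n\log|a_n\gamma_n|$ collapses to $-\infty$ along $n_j$, breaking the link $u_n=\frac1n\log|c_n|-U^{\tau_n}$. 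The paper's extra input is precisely designed for this: Lemma \ref{lem3.2} guarantees a.s.\ that \emph{every} window $(n-b\log n,\,n]$ contains a coefficient that is not exponentially small, and the integral representation \eqref{3.9} together with the norm recursion $\|P_{n-1}\|_{L_R}\le C\|P_n\|_{L_R}$ converts a hypothetical failure of \eqref{3.6} along $n_m$ into an $\eps n_m$-long run of exponentially small coefficients, contradicting \eqref{3.3}. Without Lemma \ref{lem3.2} (or an equivalent window estimate) plus such a transfer mechanism, your subsequential argument cannot be closed.
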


\noindent We summarize some useful facts obtained in the proof of Theorem \ref{P} below.

\begin{corollary} \label{cor2.2}
Suppose that $E$ is the closure of a Jordan domain $G$ with $C^{2, \alpha}$ boundary $L,$ and that the basis $\{B_k\}_{k=0}^{\infty}$ is given either by Szeg\H{o}, or by Bergman, or by Faber polynomials. If $\{a_k\}_{k=0}^{\infty}$ are non-trivial i.i.d. complex random variables with mean $0$ and finite variance, then the random polynomials $P_n(z) = \sum_{k=0}^n a_k B_k(z)$ converge almost surely to a random analytic function $f$ that is not identically zero. Moreover,
\begin{align} \label{2.1}
\lim_{n\to\infty} |P_n(z)|^{1/n} = |\Phi(z)|, \quad z\in\Omega,
\end{align}
holds with probability one.
\end{corollary}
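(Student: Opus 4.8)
The plan is to establish the two assertions separately, using throughout the $n$-th root and leading-coefficient asymptotics of the basis: on compact subsets of $\Omega$ one has $|B_k(z)|^{1/k}\to|\Phi(z)|$ locally uniformly, on $\overline{G}$ the maximum principle gives $\max_{\overline{G}}|B_k|^{1/k}\to 1$, the interior asymptotics give geometric decay $\limsup_k\max_K|B_k|^{1/k}<1$ on compacts $K\subset G$, and the leading coefficients satisfy $|\lambda_k|^{1/k}\to 1/\mathrm{cap}(E)$; these are the standard asymptotics of Szeg\H{o}, Bergman and Faber polynomials, valid under the $C^{2,\alpha}$ hypothesis. The only probabilistic input I record at the outset concerns the coefficients: since $\E|a_0|<\infty$, for each $t>1$ one has $\sum_k\P(|a_k|>t^k)\le\sum_k t^{-k}\E|a_0|<\infty$, so Borel--Cantelli gives $\limsup_k|a_k|^{1/k}\le 1$ almost surely.

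For the convergence statement, fix a compact $K\subset G$. Combining $\limsup_k|a_k|^{1/k}\le 1$ with the geometric interior decay of $|B_k|$ on $K$, the root test shows $\sum_k a_kB_k$ converges absolutely and uniformly on $K$, so the partial sums $P_n$ converge locally uniformly on $G$ to an analytic function $f$. To see $f\not\equiv 0$ almost surely, I would use that $\deg B_k=k$ makes the coefficient map $(a_k)\mapsto f$ injective, the coefficients being recoverable from $f$ (e.g.\ by the contour formula for Faber coefficients, or by orthogonality). Hence $\{f\equiv 0\}\subseteq\{a_k=0\ \text{for all }k\}$, and by independence and non-triviality $\P(a_k=0\ \forall k)=\prod_k\P(a_0=0)=0$.

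For \eqref{2.1}, fix $z\in\Omega$ and write $R=|\Phi(z)|>1$. The upper bound is the easy half: from $|a_k|\le t^k$ and $|B_k(z)|\le(R+\eps)^k$ (both eventually) one gets $|P_n(z)|\le C\big(t(R+\eps)\big)^n$ by summing the dominant geometric tail, so $\limsup_n|P_n(z)|^{1/n}\le t(R+\eps)$; letting $t\downarrow 1$ and $\eps\downarrow 0$ yields $\limsup_n|P_n(z)|^{1/n}\le R$ almost surely. A matching $\limsup\ge R$ comes cheaply from $a_nB_n(z)=P_n(z)-P_{n-1}(z)$: since $|a_n|\ge\delta$ infinitely often (second Borel--Cantelli, $a_0$ non-trivial), for such $n$ we get $\max(|P_n(z)|,|P_{n-1}(z)|)\ge\tfrac12\delta|B_n(z)|\ge\tfrac12\delta(R-\eps)^n$.

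The genuine obstacle is the full lower bound $\liminf_n|P_n(z)|^{1/n}\ge R$, since a single small coefficient cannot be excluded and the partial sum could in principle cancel. My plan is an anti-concentration argument: fixing $\eps>0$ I want $\sum_n\P\big(|P_n(z)|\le(R-\eps)^n\big)<\infty$, after which Borel--Cantelli finishes. Finite variance sets the scale: by the strong law $\sum_{k=0}^n|a_k|^2\sim n\,\E|a_0|^2$, which with $\sum_{k=0}^n|B_k(z)|^2\asymp R^{2n}$ pins the typical size of $P_n(z)$ at $R^n$ up to polynomial factors. To turn this into a summable tail I would bound the L\'evy concentration function of $P_n(z)=\sum_{k\le n}a_kB_k(z)$ at scale $\rho_n=(R-\eps)^n$ by an Esseen / Kolmogorov--Rogozin estimate, exploiting that the moduli $|B_k(z)|$ range over exponentially many scales so that many independent summands remain non-degenerate at scale $\rho_n$; this is precisely what defeats atomic coefficient distributions, for which the crude one-coefficient conditioning bound $Q_{a_0}\!\big(\rho_n/|B_n(z)|\big)$ need not tend to $0$. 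Equivalently, one may transplant through $\Phi$, where $B_k(\Psi(w))\sim w^k$ reduces matters to the a.s.\ growth $|\sum_{k\le n}a_kw^k|^{1/n}\to|w|$ of random Taylor partial sums on $|w|=R>1$, provable from the same concentration input. I expect this estimate to be the main technical point; the upgrade from each fixed $z$ to all $z\in\Omega$ simultaneously then follows by running the pointwise statement over a countable dense subset and invoking the locally uniform upper bound together with the harmonicity of the limit $\log|\Phi|$.
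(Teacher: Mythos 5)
There are two genuine gaps, one in each half of your argument.

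\textbf{The convergence half.} Your root-test argument rests on the claim that $\limsup_k\max_K|B_k|^{1/k}<1$ for compact $K\subset G$. This interior geometric decay is available when $L$ is \emph{analytic} (Carleman/Suetin asymptotics then extend across $L$ into $G$), but it is false in general under the $C^{2,\alpha}$ hypothesis. Indeed, geometric decay of $B_k$ on compacts of $G$, combined with $\|B_k\|_{\overline{G}}^{1/k}\to 1$, would force the kernel $K(\cdot,w)=\sum_k B_k(\cdot)\overline{B_k(w)}$ to continue analytically across $L$, which happens essentially only for analytic boundaries; for a nowhere-analytic $C^{2,\alpha}$ boundary the zeros of the Bergman (or Szeg\H{o}) polynomials themselves accumulate on all of $L$ with limiting distribution $\mu_E$, and then $\frac1n\log|B_n(z)|\to -\log\mathrm{cap}(E)-U^{\mu_E}(z)=0$ for $z\in G$, i.e. $\limsup_n\max_K|B_n|^{1/n}=1$. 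In that situation $\sum_k|a_k|\,|B_k(z)|$ need not converge at all: all that is true is $\sum_k|B_k(z)|^2=K(z,z)<\infty$ locally uniformly, so the series converges only \emph{conditionally}, by cancellation. This is exactly why the statement assumes mean $0$ and finite variance: the paper invokes the $L^2$/martingale result (Lemma \ref{lem 3.4}, from \cite{HKPV}) with $\psi_k=B_k$, for which square-summability of the basis is the only analytic input. A structural red flag for your version: if the root test sufficed, the mean-zero and finite-variance hypotheses would be superfluous, and the convergence would be absolute, neither of which can be expected here.

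\textbf{The lower bound in \eqref{2.1}.} You correctly identify the $\liminf$ as the crux, but the anti-concentration plan you outline does not close, and you do not carry it out. Quantitatively: Kolmogorov--Rogozin/Esseen bounds the concentration of $P_n(z)=\sum_{k\le n}a_kB_k(z)$ at scale $\rho_n=(R-\eps)^n$ by $C\rho_n\bigl(\sum_k\lambda_k^2(1-Q(a_kB_k(z),\lambda_k))\bigr)^{-1/2}$ with $\lambda_k\le\rho_n$; since $|B_k(z)|\asymp\sqrt{k}R^k$, the summands that are non-degenerate at scales $\le\rho_n$ are those with $k\le\theta n$ where $R^\theta=R-\eps$, and the resulting bound is $O(n^{-1/2})$ --- not summable, so Borel--Cantelli does not apply. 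Upgrading this to a summable (exponential) bound for general non-degenerate laws is a real problem: for Rademacher coefficients and $1<R<2$ it is precisely a regularity question for Bernoulli-convolution-type measures, which is delicate and parameter-dependent. The paper avoids pointwise anti-concentration altogether: it works with the sup norm on a level curve $L_R$, recovers coefficients by the contour formula \eqref{3.9}, and uses the two-sided bound \eqref{3.10} to show that $\|P_{n_m}\|_{L_R}^{1/n_m}<R-\eps$ along a subsequence would force \emph{a whole block} of $\eps n_m$ consecutive coefficients to be exponentially small; this contradicts the soft Borel--Cantelli fact \eqref{3.3} of Lemma \ref{lem3.2}, which holds for any non-trivial i.i.d.\ law, atomic or not. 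The pointwise statement \eqref{2.1} is then deduced from this norm statement together with the Bernstein--Walsh inequality and \eqref{3.4} (via Remark 1.2 of \cite{AB}), rather than proved directly at each $z$. If you want to salvage your route, replacing the single point $z$ by the curve $L_R$ and the random sum by the coefficients themselves is exactly the step you are missing.
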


\noindent As a consequence of Theorem \ref{P}, we show that the zeros of the sequence of derivatives $\{P_n'\}_{n=0}^{\infty}$ also equidistribute.

\begin{corollary}\label{P'}
Let $G,$ $\{a_i\}_{i=0}^{\infty}$ and $P_n$ be as in Theorem \ref{P}. Let $\tau_n'$ denote the zero counting measures of $P_n'.$ Then, $\tau_n' \stackrel{w}{\rightarrow} \mu_E$ a.s.
\end{corollary}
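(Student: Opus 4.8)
The plan is to reduce the statement to the result already established for $P_n$ by proving the analogue of \eqref{2.1} for the derivatives and then re-running the potential-theoretic argument behind Theorem \ref{P}. Write $P_n' = P_n \cdot (P_n'/P_n)$ and record the logarithmic-derivative identity
\begin{equation*}
\frac{1}{n}\,\frac{P_n'(z)}{P_n(z)} = \int \frac{d\tau_n(w)}{z-w} =: C_n(z),
\end{equation*}
so that $C_n$ is the Cauchy transform of the zero counting measure $\tau_n$. Taking $\tfrac1n\log|\cdot|$ yields
\begin{equation*}
\frac1n\log|P_n'(z)| = \frac1n\log|P_n(z)| + \frac{\log n}{n} + \frac1n\log|C_n(z)|,
\end{equation*}
and the strategy is to show that, for $z \in \Omega$, the last two terms tend to $0$ almost surely, so that \eqref{2.1} forces $\tfrac1n\log|P_n'(z)| \to \log|\Phi(z)|$.

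The heart of the matter is the behaviour of $C_n$ on $\Omega$. By Theorem \ref{P} we have $\tau_n \stackrel{w}{\rightarrow}\mu_E$ a.s., and by \eqref{2.1} together with Corollary \ref{cor2.2} the zeros of $P_n$ remain, almost surely, in a fixed bounded neighbourhood of $E=\overline{G}$. Fixing $z \in \Omega$, the function $w\mapsto 1/(z-w)$ is then bounded and continuous on a compact set containing the supports of $\tau_n$ (for large $n$) and of $\mu_E\subset L$, so weak convergence gives $C_n(z)\to C(z):=\int d\mu_E(w)/(z-w)$. I would then evaluate this limit via the equilibrium relation $\int \log|z-w|\,d\mu_E(w) = \log|\Phi(z)| + \log\mathrm{cap}(E)$ on $\Omega$; differentiating in $z$ gives
\begin{equation*}
C(z) = \int \frac{d\mu_E(w)}{z-w} = 2\,\frac{\partial}{\partial z}\log|\Phi(z)| = \frac{\Phi'(z)}{\Phi(z)}.
\end{equation*}
Since $\Phi:\Omega\to\Delta$ is conformal with $|\Phi|>1$, we have $C(z)\neq 0,\infty$ for every $z\in\Omega$, whence $\tfrac1n\log|C_n(z)|\to 0$; combined with $\tfrac{\log n}{n}\to 0$ this establishes $\lim_{n}|P_n'(z)|^{1/n}=|\Phi(z)|$ for $z\in\Omega$, almost surely.

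With this analogue of \eqref{2.1} in hand, the conclusion $\tau_n' \stackrel{w}{\rightarrow}\mu_E$ a.s.\ follows by repeating the argument used to deduce Theorem \ref{P} from \eqref{2.1}: the Gauss--Lucas theorem places all zeros of $P_n'$ in the convex hull of those of $P_n$, giving tightness of $\{\tau_n'\}$, so every weak limit point is a probability measure of compact support; the lower envelope theorem (principle of descent) applied to the potentials $-\tfrac{1}{n-1}\log|P_n'|$ together with the limit above then identifies each such limit point with $\mu_E$ (here $\deg P_n'=n-1$, but $\tfrac1n$ and $\tfrac1{n-1}$ are interchangeable in the limit). I expect the only delicate point to be the justification that $C_n(z)\to C(z)$ holds on a large enough subset of $\Omega$: weak convergence controls the bulk of the zeros, but one must rule out the possibility that a sublinear number of zeros of $P_n$ approach the fixed point $z$ exponentially fast, which could inflate $|C_n(z)|$. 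This is harmless because only convergence for quasi-every $z$ (equivalently, $L^1_{\mathrm{loc}}$ convergence of the potentials) is needed for the descent argument, and that in turn follows from the locally uniform convergence of $\tfrac1n\log|P_n|$ to $\log|\Phi|$ on $\Omega$ furnished in the proof of Theorem \ref{P}, from which $|C_n(z)|$ is seen to be subexponential off an exceptional set of capacity zero.
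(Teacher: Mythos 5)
Your proposal takes a genuinely different route from the paper, but it contains two gaps, one of which breaks the logic outright. The decisive one is in your final step, and it stems from a mischaracterization of how Theorem \ref{P} was proved: Theorem \ref{P} is not ``deduced from \eqref{2.1}''. The paper feeds three separate hypotheses into Grothmann's theorem, and the exterior growth condition \eqref{3.6} (equivalently \eqref{2.1}) is only one of them; the interior condition \eqref{3.5} is indispensable. Exterior asymptotics plus tightness cannot identify weak limit points of $\tau_n'$, because logarithmic potentials on $\Omega$ are blind to balayage of interior mass onto $L$: for $E=\overline{\D}$ the polynomials $Q_n(z)=z^n$ satisfy $\|Q_n\|_E=1$ and $\tfrac1n\log|Q_n(z)|=g_\Omega(z,\infty)$ exactly on $\Omega$, yet their zero counting measures are all $\delta_0$, not the equilibrium measure of the disk. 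So even granting your limit $\lim_n|P_n'(z)|^{1/n}=|\Phi(z)|$ quasi-everywhere on $\Omega$ and tightness, a weak limit $\nu$ of $\tau_{n_k}'$ is determined only up to sweeping mass from inside $G$ onto $L$, and ``the lower envelope theorem identifies each limit point with $\mu_E$'' does not follow. What is missing is exactly \eqref{3.5} for $P_n'$, which the paper obtains almost surely from $P_n'\to f'$ locally uniformly in $G$, $f'\not\equiv 0$ (else $f$ is constant and $a_k=0$ for all $k\ge 1$), and Hurwitz's theorem; without some substitute for this interior input your argument cannot close.

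The second gap is the claim that, almost surely, all zeros of $P_n$ lie in a fixed bounded neighbourhood of $E$. Neither \eqref{2.1} nor Corollary \ref{cor2.2} gives this: weak convergence $\tau_n\stackrel{w}{\rightarrow}\mu_E$ only forces the \emph{fraction} of zeros outside a neighbourhood of $E$ to vanish, and the claim is actually false for some admissible coefficient laws. Take a bounded, symmetric, non-trivial law with enough mass near $0$ that $\sum_n\P(|a_n|\le e^{-n^2})=\infty$ (this is compatible with mean zero and finite variance); by Borel--Cantelli, a.s. $|a_n|\le e^{-n^2}$ for infinitely many $n$, and writing $P_n(z)=a_n\gamma_n\prod_{j=1}^n(z-z_{j,n})$, where $\gamma_n$ is the leading coefficient of $B_n$, and comparing $|P_n(z_0)|$ at a point $z_0\in G$ with $f(z_0)\ne 0$ against $|a_n\gamma_n|\le e^{-n^2+Cn}$, one finds a zero with $|z_0-z_{j,n}|\ge e^{n-C}$ along that subsequence. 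This simultaneously invalidates the convergence $C_n(z)\to C(z)$ (the supports of $\tau_n$ are not contained in a common compact set) and the Gauss--Lucas tightness argument (the convex hull of the zeros of $P_n$ is unbounded along a subsequence). Your fallback — locally uniform convergence of $\tfrac1n\log|P_n|$ on $\Omega$ ``furnished in the proof of Theorem \ref{P}'' — is not furnished there: the paper controls the maximum over each level curve $L_R$ in \eqref{3.6} and proves the pointwise statement \eqref{2.1}; locally uniform convergence would force the zeros of $P_n$ to avoid every compact subset of $\Omega$ eventually, which is neither proved nor generally true. The paper's own proof sidesteps all of this by verifying Grothmann's three hypotheses directly for $P_n'$: the Markov--Bernstein inequality \eqref{MarkovB} gives \eqref{3.4}, Hurwitz gives \eqref{3.5}, and the two-sided bounds \eqref{B_n'} for $B_n'$ on $L_R$ (obtained by differentiating \eqref{3.11} and estimating $A_n'$ by a Cauchy integral) let the integral-representation argument and the contradiction with Lemma \ref{lem3.2} run verbatim to give \eqref{3.6}.
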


\noindent The natural boundary for a random power series of the form $\sum_{k=0}^{\infty}a_kz^k$ where $\{a_k\}_{k=0}^{\infty}$ are i.i.d random variables has been investigated by quite a few authors. We refer especially to \cite{AR}, but see also \cite{Ka} and the references therein. The result there is that for such a random series, the circle of convergence is a.s. the natural boundary. Some extensions are possible when the $\{a_k\}_{k=0}^{\infty}$ are merely independent. Therefore it seems reasonable to ask if such a result holds when the random series is formed by other polynomial basis. In \cite{PR}, we remarked (without proof) that the random series formed by the basis $\{B_k\}_{k=0}^{\infty},$ has natural boundary $L.$ We prove that result here.   

\begin{theorem}\label{natural}
Suppose that $E$ is the closure of a Jordan domain $G$ with analytic boundary $L,$ and that the basis $\{B_k\}_{k=0}^{\infty}$ is given either by Szeg\H{o}, or by Bergman, or by Faber polynomials. Assume further that the random coefficients $\{a_k\}_{k=0}^{\infty}$ are non-trivial i.i.d. complex random variables satisfying $\E[\log^+|a_0|]<\infty.$ Then the series $$\sum_{k=0}^{\infty}a_k B_k(z)$$ converges a.s. to a random analytic function $f\not\equiv 0$ in $G,$ and moreover, with probability one, $\partial G = L$ is the natural boundary for $f.$
\end{theorem}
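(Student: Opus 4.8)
The plan is to reduce the statement to the classical fact that a random power series with independent, non-degenerate coefficients has its circle of convergence as a natural boundary almost surely, and then to transport that fact to $L$ through the exterior conformal map. First I would pin down the growth of the coefficients. Since $a_0$ is non-trivial and $\E[\log^+|a_0|]<\infty$, two Borel--Cantelli arguments give $\limsup_{k\to\infty}|a_k|^{1/k}=1$ a.s.: the upper bound follows from $\sum_k\P(\log^+|a_k|>\eps k)<\infty$ for every $\eps>0$, and the lower bound from $\P(|a_0|>\delta)>0$ together with independence. Because $L$ is analytic, $\Phi$ extends to a conformal map of $\{|\Phi|>\rho_0\}$ for some $\rho_0\in(0,1)$, whose inverse $\Psi$ is analytic and univalent on $\{|w|>\rho_0\}$, and the level curves $L_\rho=\{|\Phi|=\rho\}$ with $\rho_0<\rho<1$ are analytic Jordan curves inside $G$. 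For each of the three bases the strong asymptotics on an analytic boundary provide a fixed non-vanishing analytic factor $W$ on $\{|w|>\rho_0\}$, deterministic scalars $\lambda_k$ growing at most polynomially, and a constant $\rho\in(\rho_0,1)$ with
\[
B_k(\Psi(w)) = \lambda_k\,W(w)\,w^k\bigl(1+\eta_k(w)\bigr),\qquad |\eta_k(w)|\le C\rho^k\ \ (|w|\ge\rho_1),
\]
for some $\rho_0<\rho_1<1$ (in the Faber case $\lambda_k\equiv 1$ and $W$ is trivial). Combined with $\limsup|a_k|^{1/k}=1$, these asymptotics together with a normal-family argument as in Corollary \ref{cor2.2} yield convergence of $f=\sum_k a_kB_k$ to an analytic function on $G$; since the $B_k$ have strictly increasing degrees the expansion is unique, so $f\equiv 0$ would force $a_k=0$ for all $k$, an event of probability zero by non-triviality and independence. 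Hence $f\not\equiv 0$ a.s.

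It will be useful to record that the event $A=\{L\text{ is the natural boundary of }f\}$ is a tail event: altering finitely many $a_k$ changes $f$ by a polynomial, hence by an entire function, which cannot affect whether $f$ continues analytically across an arc of $L$. Thus $\P(A)\in\{0,1\}$ by Kolmogorov's zero--one law, and it would suffice to obtain the natural boundary on a set of positive probability; in fact the transfer below produces it almost surely.

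The core step is to pass to the variable $w=\Phi(z)$. On the annulus $\{\rho_0<|w|<1\}$, which $\Psi$ maps into the subregion of $G$ bounded by $L$ and $L_{\rho_0}$, set $g(w)=f(\Psi(w))$. Using the asymptotics,
\[
g(w)=W(w)\sum_{k}a_k\lambda_k w^k+\sum_{k}a_k\lambda_k\,W(w)\,w^k\eta_k(w)=:W(w)\,h(w)+g_2(w).
\]
The geometric bound on $\eta_k$ and the subexponential growth of $a_k\lambda_k$ make the second series converge uniformly on an annulus $\{\rho_1\le|w|\le R_1\}$ with $R_1>1$; since each $\eta_k$ is analytic on $\{|w|>\rho_0\}$, the function $g_2$ is analytic in a full neighborhood of $|w|=1$. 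As $W$ is analytic and non-vanishing across $|w|=1$, it follows that $g$ continues analytically across an arc of the unit circle if and only if $h(w)=\sum_k(a_k\lambda_k)w^k$ does. Here the analyticity of $L$ is essential: it is precisely what forces $\rho_0<1$ and hence the geometric decay that confines all singular behavior of $g$ to the $h$ term.

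Finally, $h$ is a random power series whose coefficients $a_k\lambda_k$ are independent with $\limsup|a_k\lambda_k|^{1/k}=1$ a.s., so its radius of convergence equals $1$. In the Faber case the coefficients are i.i.d.\ and non-trivial, so \cite{AR} applies directly and every point of $|w|=1$ is a.s.\ singular for $h$; for the Szeg\H{o} and Bergman bases the deterministic normalizers $\lambda_k$ leave the coefficients independent and non-degenerate, and the extension of \cite{AR} to independent coefficients yields the same conclusion. Transporting back through $\Psi$, which is analytic and univalent across $|w|=1$ and carries the unit circle onto $L$, shows that $f=g\circ\Phi$ is non-continuable across every arc of $L$ almost surely, i.e.\ $L$ is a.s.\ the natural boundary of $f$. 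I expect the main obstacle to be the third paragraph: assembling the strong asymptotics with geometric error for all three bases and verifying that the correction series $g_2$ is analytic across $|w|=1$, together with the bookkeeping needed because the Szeg\H{o}/Bergman normalizers render the effective coefficients merely independent rather than i.i.d.
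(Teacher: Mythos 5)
Your proposal is correct and follows essentially the same route as the paper: both transfer the problem to the variable $w=\Phi(z)$ using the strong asymptotics of the basis polynomials with geometrically small errors (valid because $L$ is analytic), split off a random power series $\sum_k a_k\lambda_k w^k$ from a correction term analytic in a full neighborhood of $|w|=1$, and conclude by Arnold's non-continuability theorems (the i.i.d.\ case for Faber, the independent-coefficient version, Satz 12 of \cite{AR}, for the Szeg\H{o}/Bergman normalizers). The differences are cosmetic: you treat the three bases in unified notation and add an unneeded zero--one law remark, whereas the paper handles the Faber and Bergman cases separately.
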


\section{Proofs}

\begin{proof}[Proof of Theorem \ref{P} and of Corollary \ref{cor2.2}]
We closely follow the ideas in \cite{PR}. The proof consists of two probabilistic lemmas followed by the use of a deterministic theorem in potential theory. The first lemma below follows from a standard application of the Borel-Cantelli lemma.

\begin{lemma}\label{lem3.1}
If $\{a_k\}_{k=0}^{\infty}$ are non-trivial, independent and identically distributed complex random variables that satisfy $\E[\log^+|a_0|]<\infty$, then
\begin{align} \label{3.1}
\limsup_{n\to\infty} |a_n|^{1/n}= 1 \quad\mbox{ a.s.},
\end{align}
and
\begin{align} \label{3.2}
\limsup_{n\to\infty} \left(\max_{0\le k\le n} |a_k| \right)^{1/n} = 1 \quad\mbox{ a.s.}
\end{align}
\end{lemma}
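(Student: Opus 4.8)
The plan is to prove the two statements separately, in each case controlling the upper bound via the convergence half of Borel--Cantelli and the lower bound via the divergence half. For \eqref{3.1} I would first establish $\limsup_{n} |a_n|^{1/n} \le 1$ almost surely. Fix $\eps>0$ and consider the events $A_n = \{|a_n| > (1+\eps)^n\}$. Since the $a_k$ are identically distributed, $\P(A_n) = \P\big(\log^+|a_0| > n\log(1+\eps)\big)$ for all $n$ large enough that $(1+\eps)^n>1$. The standard tail-sum criterion for integrability states that $\sum_n \P(Y > cn) < \infty$ for every $c>0$ whenever $\E[Y]<\infty$; applying this with $Y = \log^+|a_0|$ and $c = \log(1+\eps)$ gives $\sum_n \P(A_n) < \infty$, so the first Borel--Cantelli lemma yields $\P(A_n \text{ i.o.})=0$. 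Hence $|a_n| \le (1+\eps)^n$ for all large $n$ almost surely, and intersecting over $\eps = 1/m$, $m\in\N$, produces $\limsup_{n} |a_n|^{1/n} \le 1$ a.s.

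For the matching lower bound I would invoke independence and non-triviality. Fix $\eps \in (0,1)$ and set $B_n = \{|a_n| > (1-\eps)^n\}$. Since $(1-\eps)^n \to 0$ while $\P(a_0 \ne 0)>0$ by non-triviality, we have $\P(B_n) = \P\big(|a_0| > (1-\eps)^n\big) \to \P(|a_0|>0) > 0$, so $\sum_n \P(B_n) = \infty$. As the $B_n$ are independent, the second Borel--Cantelli lemma gives $\P(B_n \text{ i.o.})=1$, whence $\limsup_{n} |a_n|^{1/n} \ge 1-\eps$ a.s.; letting $\eps \to 0$ along a sequence yields $\limsup_{n} |a_n|^{1/n} \ge 1$ a.s. Combining the two bounds establishes \eqref{3.1}. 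The one point that genuinely requires care is this lower bound: independence, not merely the divergence of $\sum_n \P(B_n)$, is what makes the second Borel--Cantelli lemma applicable, and non-triviality is exactly what keeps $\P(B_n)$ bounded away from $0$.

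Finally, \eqref{3.2} follows from \eqref{3.1} with essentially no extra work. Writing $M_n = \max_{0\le k\le n}|a_k|$, the inequality $M_n \ge |a_n|$ immediately gives $\limsup_{n} M_n^{1/n} \ge \limsup_{n}|a_n|^{1/n} = 1$ a.s. For the reverse inequality I would work on the full-measure event on which \eqref{3.1} holds: given $\eps>0$ there is $N$ with $|a_k| \le (1+\eps)^k$ for all $k \ge N$, so for $n \ge N$ one has $M_n \le \max\{M_N, (1+\eps)^n\}$. Since the fixed finite constant $M_N$ is eventually dominated by $(1+\eps)^n$, we obtain $M_n^{1/n} \le 1+\eps$ for all large $n$, and letting $\eps \to 0$ gives $\limsup_{n} M_n^{1/n} \le 1$ a.s., which completes the proof.
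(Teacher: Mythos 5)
Your proof is correct and is exactly the argument the paper has in mind: the paper justifies Lemma \ref{lem3.1} only as ``a standard application of the Borel--Cantelli lemma'' (deferring details to \cite{PR}), and your write-up is precisely that standard application, with the first Borel--Cantelli lemma plus the tail-sum criterion for $\E[\log^+|a_0|]<\infty$ giving the upper bound, the second Borel--Cantelli lemma plus non-triviality giving the lower bound, and \eqref{3.2} deduced elementarily from \eqref{3.1}.
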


\noindent A slightly more delicate application of Borel-Cantelli gives the following result. For the proof, we refer to \cite{PR}.
\begin{lemma}\label{lem3.2}
If $\{a_k\}_{k=0}^{\infty}$ are non-trivial i.i.d. complex random variables, then
there is a $b>0$ such that
\begin{align} \label{3.3}
\liminf_{n\to\infty} \left(\max_{n-b\log{n}<k\le n} |a_k|\right)^{1/n} \ge 1 \quad\mbox{ a.s.}
\end{align}
\end{lemma}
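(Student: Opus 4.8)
The plan is to derive the lemma from the first Borel--Cantelli lemma after reformulating the liminf as a countable family of tail events. Write $L_n=\max_{n-b\log n<k\le n}|a_k|$. The assertion $\liminf_n L_n^{1/n}\ge 1$ is equivalent to the statement that, for every $\eps>0$, there are only finitely many $n$ with $L_n^{1/n}<1-\eps$; and since any witness $\eps$ can be replaced by a smaller $1/j$, its negation is the countable union $\bigcup_{j\ge 1}\{L_n^{1/n}<1-1/j\text{ i.o.}\}$. Hence it suffices to show, for each fixed $\eps\in(0,1)$, that almost surely only finitely many of the events
$$A_n^{\eps}=\{L_n<(1-\eps)^n\}=\bigcap_{n-b\log n<k\le n}\{|a_k|<(1-\eps)^n\}$$
occur, and then intersect over the sequence $\eps=1/j$.

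Next I would estimate $\P(A_n^\eps)$ using independence. Let $m_n=\#\{k\in\Z:\ n-b\log n<k\le n\}$ be the number of indices in the window; since the interval $(n-b\log n,\,n]$ has length $b\log n$, we have $m_n\ge b\log n-1\ge\tfrac{b}{2}\log n$ for all large $n$. By the i.i.d.\ assumption,
$$\P(A_n^\eps)=\bigl(\P(|a_0|<(1-\eps)^n)\bigr)^{m_n}=q_n(\eps)^{m_n},\qquad q_n(\eps):=\P\bigl(|a_0|<(1-\eps)^n\bigr).$$

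The key observation is that the events $\{|a_0|<(1-\eps)^n\}$ decrease in $n$ to $\{a_0=0\}$, so by continuity of measure $q_n(\eps)\to p:=\P(a_0=0)$, and non-triviality of $a_0$ gives exactly $p<1$. Fixing any $\eta\in(p,1)$ — a choice independent of $\eps$ — we obtain $q_n(\eps)<\eta$ for all sufficiently large $n$, whence
$$\P(A_n^\eps)\le\eta^{m_n}\le\eta^{(b/2)\log n}=n^{-(b/2)\log(1/\eta)}$$
for large $n$. Choosing $b$ once and for all with $b>2/\log(1/\eta)$ forces the exponent to be strictly less than $-1$, so $\sum_n\P(A_n^\eps)<\infty$ for every $\eps$ with the same constant $b$. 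Borel--Cantelli then shows that a.s.\ only finitely many $A_n^\eps$ occur, and a union over $\eps=1/j$ finishes the argument.

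I expect the main obstacle to be conceptual rather than computational: non-triviality only guarantees $\P(a_0=0)<1$, not $\P(a_0=0)=0$, so $q_n(\eps)$ need not tend to $0$ if $a_0$ carries a large atom at the origin. This is precisely why one must work with the limit value $p$ and compare against a fixed level $\eta\in(p,1)$; doing so simultaneously controls all $\eps$ and pins down a single $b$. A minor bookkeeping point is that the index beyond which $q_n(\eps)<\eta$ depends on $\eps$, but this only discards finitely many terms and does not affect summability. The logarithmic window width is exactly what converts the geometric factor $\eta^{m_n}$ into the summable power $n^{-(b/2)\log(1/\eta)}$, which is the reason a window of size $\Theta(\log n)$ is both necessary and sufficient here.
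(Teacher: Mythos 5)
Your proof is correct, and it takes essentially the same route as the argument the paper invokes (it refers for the proof to \cite{PR}): bound the probability that every one of the $\Theta(\log n)$ coefficients in the window is small by a product $q^{m_n}\le n^{-(b/2)\log(1/q)}$ with $q<1$ supplied by non-triviality, choose $b$ once so this is summable, and apply the first Borel--Cantelli lemma. The only difference is cosmetic: the cited proof uses a single fixed threshold $t>0$ with $\P(|a_0|<t)<1$, concluding that the window maximum is eventually at least $t$ and using $t^{1/n}\to 1$, which avoids your geometrically decaying thresholds $(1-\eps)^n$ and the countable union over $\eps=1/j$.
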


\noindent We use the following theorem of Grothmann \cite{Gr} which describes the zero distribution of deterministic polynomials. 

\vspace{0.1in}

\noindent Let $E\subset\C$ be a compact set of positive capacity such that $\Omega=\overline{\C} \setminus E$ is connected and regular. The Green function of $\Omega$ with pole at $\infty$ is denoted by $g_\Omega(z,\infty)$. We use $\|\cdot\|_K$ for the supremum norm on a compact set $K$.

\noindent
{\bf Theorem G.} {\em If a sequence of polynomials $P_n(z),\ \deg(P_n)\le n\in\N,$ satisfies
\begin{align} \label{3.4}
\limsup_{n\to\infty} \|P_n\|_E^{1/n} \le 1,
\end{align}
for any closed set $K\subset E^\circ$
\begin{align} \label{3.5}
\lim_{n\to\infty} \tau_n(K) = 0,
\end{align}
and there is a compact set $S\subset\Omega$ such that
\begin{align} \label{3.6}
\liminf_{n\to\infty} \max_{z\in S} \left(\frac{1}{n} \log|P_n(z)| - g_\Omega(z,\infty) \right) \ge 0,
\end{align}
then the zero counting measures $\tau_n$ of $P_n$ converge weakly to $\mu_E$ as $n\to\infty.$}

\vspace{0.1in}

\noindent The idea now is to check that with probability $1,$ our sequence of polynomials satisfies the hypothesis in Grothmann's theorem. 

\vspace{0.1in}

\noindent Note that \eqref{3.4} is satisfied for $E$ almost surely by \eqref{3.2}, and the estimate
\[
\|P_n\|_E \le \sum_{k=0}^n |a_k| \|B_k\|_E \le (n+1) \max_{0\le k\le n} |a_k| \, \max_{0\le k\le n} \|B_k\|_E,
\]
as
\[
\limsup_{n\to\infty} \left(\max_{0\le k\le n} \|B_k\|_E\right)^{1/n} \le 1.
\]

This last fact follows from the well known result that in all three cases of polynomial bases we consider in this theorem, we have
\begin{align} \label{3.7}
\lim_{n\to\infty} |B_n(z)|^{1/n} = |\Phi(z)|
\end{align}
holds uniformly on compact subsets of $\Omega.$ To check that \eqref{3.5} holds, we use the following lemma from \cite{HKPV}

\begin{lemma}\label{lem 3.4}
Let $\psi_n$ be holomorphic functions on a domain $\Lambda.$ Assume that $\sum_{n=0}^{\infty}|\psi_n|^2$ converges uniformly on compact sets of $\Lambda.$ Let $a_n$ be i.i.d. random variables with zero mean and finite variance. Then, almost surely, $\sum_{n=0}^{\infty}a_n\psi_n(z)$ converges uniformly on compact subsets of $\Lambda$ and hence defines a random analytic function.
\end{lemma}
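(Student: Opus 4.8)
The plan is to localize the problem to disks, to dominate the supremum norm of a holomorphic function by its $L^2$ norm over a slightly larger disk, and then to establish convergence in $L^2$ by viewing the partial sums as a martingale with values in a Hilbert space. First I would reduce to a countable family of disks: it suffices to show that for every closed disk $\overline{D}(z_0,r)$ admitting some $r'>r$ with $\overline{D}(z_0,r')\subset\Lambda$, the partial sums $S_N(z)=\sum_{n=0}^{N}a_n\psi_n(z)$ converge uniformly on $\overline{D}(z_0,r)$ almost surely. Exhausting $\Lambda$ by countably many such disks and intersecting the corresponding almost-sure events yields uniform convergence on every compact subset of $\Lambda$, and the limit, being a locally uniform limit of holomorphic functions, is itself holomorphic.

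The deterministic ingredient is the sub-mean-value estimate. Setting $\rho=r'-r$, for any $h$ holomorphic on $D(z_0,r')$ and any $z\in\overline{D}(z_0,r)$ the area mean value property over $D(z,\rho)\subset D(z_0,r')$ combined with the Cauchy--Schwarz inequality gives
\begin{align*}
|h(z)|=\left|\frac{1}{\pi\rho^2}\int_{D(z,\rho)}h\,dA\right|\le\frac{1}{\sqrt{\pi}\,\rho}\,\|h\|_{L^2(D(z_0,r'))},
\end{align*}
so that $\|h\|_{\overline{D}(z_0,r)}\le(\sqrt{\pi}\,\rho)^{-1}\|h\|_{L^2(D(z_0,r'))}$. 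Applying this to the holomorphic differences $h=S_N-S_M$ reduces the task to showing that $\{S_N\}$ is almost surely Cauchy in $L^2(D(z_0,r'))$.

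For the probabilistic ingredient, write $D'=D(z_0,r')$, $H=L^2(D')$, and $c_n=\int_{D'}|\psi_n|^2\,dA$. The hypothesis that $\sum_n|\psi_n|^2$ converges uniformly on the compact set $\overline{D'}\subset\Lambda$ gives $\sum_n c_n=\int_{D'}\sum_n|\psi_n|^2\,dA<\infty$. Regarding $X_n=a_n\psi_n$ as $H$-valued random variables, they are independent, have mean zero, and satisfy $\E\|X_n\|_H^2=\E|a_n|^2\,\|\psi_n\|_H^2=\sigma^2 c_n$ with $\sigma^2=\E|a_0|^2$, whence $\sum_n\E\|X_n\|_H^2=\sigma^2\sum_n c_n<\infty$. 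The Hilbert-space form of Kolmogorov's maximal inequality, $\P(\max_{k\le N}\|S_k-S_M\|_H\ge\lambda)\le\lambda^{-2}\sum_{M<n\le N}\E\|X_n\|_H^2$, then shows that $\{S_N\}$ is almost surely Cauchy in $H$; equivalently, $\{S_N\}$ is an $L^2$-bounded $H$-valued martingale and hence converges almost surely. Combined with the deterministic estimate applied to the differences, this yields almost-sure uniform convergence on $\overline{D}(z_0,r)$, and the reduction above completes the argument.

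I expect the only real subtlety to be the upgrade from convergence in mean to almost-sure convergence, i.e. the use of the maximal (martingale) inequality in the Hilbert-space setting rather than merely the $L^2(\Omega;H)$-Cauchy estimate $\E\|S_N-S_M\|_H^2=\sum_{M<n\le N}\E\|X_n\|_H^2$, together with the bookkeeping that keeps the exceptional null set independent of the chosen disk, so that a single event of full probability works simultaneously for the entire exhaustion.
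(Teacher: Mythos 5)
Your proof is correct, and it takes essentially the same route as the source: the paper does not prove this lemma itself but quotes it from \cite{HKPV}, where the standard argument is exactly yours --- dominate the sup norm of the holomorphic differences $S_N-S_M$ on a disk by their $L^2$ norm on a slightly larger disk via the area mean value property and Cauchy--Schwarz, then get almost sure convergence in $L^2$ from the fact that the partial sums are an $L^2$-bounded Hilbert-space-valued martingale with independent mean-zero increments (Kolmogorov's maximal inequality in Hilbert space). Your reduction to a countable family of disks correctly keeps the exceptional null set uniform over the exhaustion, so nothing is missing.
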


\noindent It is well known that $\sum_{n=0}^{\infty}|B_n(z)|^2 = K(z,z),$ where $K(z,w)$ denotes the Bergman (or correspondingly Szeg\H{o}) kernel of the domain $G$ when $\{B_i\}_{i=0}^{\infty}$ denotes the Bergman or Szeg\H{o} basis respectively. For the case of the Faber polynomials, the convergence follows from the estimates of the $\sup$ norm $||P_n||_K$ on any compact $K\subset G$, see \cite{Su}. With this knowledge, taking $\psi_n = B_n$ and $\Lambda= G$ in Lemma \ref{lem 3.4}, we obtain that almost surely, $\sum_{n=0}^{\infty}a_nB_n(z)$ converges uniformly on compact subsets of $G$ and hence defines a random analytic function $f.$ The uniqueness of series expansions of these polynomial basis ensures that $f$ is not identically $0.$ Since $P_n\to f,$  an application of Hurwitz's theorem from basic complex analysis now proves \eqref{3.5}. Incidentally this also proves the corresponding part of Corollary \ref{cor2.2}

If $\tau_n$ do not converge to $\mu_E$ a.s., then \eqref{3.6} cannot hold a.s. for any compact set  $S$ in $\Omega.$ We choose $S=L_R =\{z: g(z)=R\},$ with $R>1$, and find a subsequence $n_m,\ m\in\N,$ such that
\begin{align} \label{3.8}
\limsup_{m\to\infty} \|P_{n_m}\|_{L_R}^{1/n_m} < R,
\end{align}
holds with positive probability. It follows from a result of Suetin \cite{Su} that for Bergman polynomials,
\begin{equation}\label{3.11}
B_n(z) = \sqrt{\frac{n+1}{\pi}}\Phi^n(z)\Phi'(z)\left(1 + A_n(z)\right),
\end{equation}
holds locally uniformly in $\Omega$  where we recall that $\Phi$ is the exterior conformal map, $\Phi:\Omega\to\Delta,\ \Phi(\infty)=\infty,\ \Phi'(\infty)>0,$ and 

\begin{equation}\label{error}
|A_n(z)|\leq c \dfrac{\log(n)}{n^2}.
\end{equation} 

Similar asymptotic formulas as \eqref{3.11} are valid for Szeg\H{o} and Faber polynomials but without the factor $\sqrt{n+1}.$ The proofs for these basis have to  accordingly modified. Equation \eqref{3.11} implies that all zeros of $B_n$ are contained inside $L_R$ for all large $n$. This allows us to write an integral representation
\begin{align} \label{3.9}
a_n = \frac{1}{2\pi i} \int_{L_R} \frac{P_n(z)\,dz}{z B_n(z)},
\end{align}
which is valid for all large $n\in\N$ because $P_n(z)/(z B_n(z)) = a_n/z + O(1/z^2)$ for $z\to\infty.$ The asymptotic on  $B_n$ from \eqref{3.11} implies that there are positive constants $c_1$ and $c_2$ that do not depend on $n$ and $z$, such that
\begin{align} \label{3.10}
c_2 \sqrt{n}\, \rho^n \le |B_n(z)| \le c_1 \sqrt{n}\, \rho^n, \quad z\in L_{\rho},\ \rho>1,\ n\in\N.
\end{align}
We estimate from \eqref{3.9} and \eqref{3.10} with $\rho=R$ that
\[
|a_n| \le \frac{|L_R|}{2\pi d} \frac{\|P_n\|_{L_R}}{c_2 \sqrt{n}\, R^n},
\]
where $|L_R|$ is the length of $L_R$ and $d:=\min_{z\in L_R} |z|.$ It follows that
\[
\|P_{n-1}\|_{L_R} \le \|P_n\|_{L_R} + |a_n| \|B_n\|_{L_R} \le \|P_n\|_{L_R} \left(1 + \frac{|L_R|}{2\pi d} \frac{c_1}{c_2} \right) =: C\, \|P_n\|_{L_R}, \quad n\in\N.
\]
Applying this estimate repeatedly, we obtain that
\[
\|P_{n-k}\|_{L_R} \le  C^k\, \|P_n\|_{L_R}, \quad k\le n,
\]
so that \eqref{3.9} yields
\[
|a_{n-k}| \le \frac{|L_R|}{2\pi d} \frac{\|P_{n-k}\|_{L_R}}{c_2 \sqrt{n-k}\, R^{n-k}} \le \frac{|L_R|}{2\pi d} \frac{C^k\, \|P_n\|_{L_R}}{c_2 \sqrt{n-k}\, R^{n-k}}.
\]
Choosing sufficiently small $\eps>0$ and using \eqref{3.8}, we deduce from previous inequality that
\[
|a_{n_m-k}| \le q^{n_m}, \quad 0\le k\le \eps n_m,
\]
for some $q\in(0,1)$ and all sufficiently large $n_m$, with positive probability. The latter estimate clearly contradicts \eqref{3.3} of Lemma \ref{lem3.2}. Hence \eqref{3.6} holds for $S=L_R,$ with any $R>1$, and $\tau_n$ converge weakly to $\mu_E$ with probability one. Note that \eqref{3.6} for $S=L_R,$ with $R>1$, is equivalent to \eqref{2.1}. Indeed, we have equality in \eqref{3.6}, with $\lim$ instead of $\liminf$, by Bernstein-Walsh inequality and \eqref{3.4}, see Remark 1.2 of \cite[p. 51]{AB} for more details. This concludes the proof of Theorem \ref{P} as well as the proof of Corollary \ref{cor2.2}.

\end{proof}

\begin{proof}[Proof of Corollary \ref{P'}]

The method of proof is similar to that of Theorem \ref{P}, namely check that the conditions in Grothmann's result hold almost surely. First, we use a Markov-Bernstein result (cf. \cite{IP} and the references therein) to bound the sup norm of $P_n'$ on $E.$ 

\begin{equation}\label{MarkovB}
||P_n'||_{E}\leq c(E)n^2||P_n||_E.
\end{equation}

\noindent Therefore with probability one, $$\limsup_{n\to\infty} \|P_n'\|_E^{1/(n-1)}\leq \limsup_{n\to\infty} \left(c(E)n^2\|P_n\|_E\right)^{1/(n-1)} \le 1.$$  

\noindent This shows that \eqref{3.4} holds for $P_n'.$ Next, we know from the proof of Theorem \ref{P} that with probability one, $P_n\to f$ uniformly on compacts, where $f$ is a nonzero random analytic function. From this we obtain that $P_n'\to f'$ also uniformly on compacts. The function $f'$ is not identically $0,$ for if it were, $f\equiv c$ for some constant $c,$ and by the uniqueness of series expansion for the polynomial basis under consideration, this would imply that $a_i =0$ for $i\geq 1.$  This contradicts Lemma \ref{lem3.2}. From here, an application of Hurwitz's theorem now yields that $\tau_n'(K)\to 0$ for every compact set $K\subset G.$ This proves equation \eqref{3.5} for $P_n'.$ Finally, recall that $$B_n(z) = \sqrt{\frac{n+1}{\pi}}\Phi^n(z)\Phi'(z)\left(1 + A_n(z)\right)$$ 
where $A_n$ satisfies the estimate \eqref{error}. Differentiating this, we obtain bounds for $B_n'$ on $L_R.$ Namely

\begin{equation}\label{B_n'}
c_4n^{\frac{3}{2}}R^{n-1}\leq |B_n'(z)|\leq c_5n^{\frac{3}{2}}R^{n-1}.
\end{equation}
 
\noindent To obtain this asymptotic, we have used a local Cauchy integral to estimate $A_n'.$ 
$$A_n'(z) = \frac{1}{2\pi i}\int_{\partial B_{\delta}(z)}\dfrac{A_n(w)}{(z-w)^2}dw$$

\noindent for $z\in L_R$ with $\delta > 0$ being chosen so that the ball $B_{\delta}(z)$ stays away from the boundary, say $\delta = \frac{1}{5}d(L_R, L).$ Using the uniform bound \eqref{error} in the above integral shows that an analogous estimate holds for $A_n'.$ Once we obtain \eqref{B_n'}, we note that the proof for \eqref{3.6} for $P_n'$ follows as in Theorem \ref{P}. All the conditions in Grothmann's theorem are satisfied and hence we have the required convergence. 
\end{proof}

\noindent \textbf{Remark:} Although Theorem \ref{P} and Corollary \ref{P'} have been stated for Jordan domains with $C^{2, \alpha}$ boundary, it is easy to see that the same proof goes through if for instance $G$ is a Jordan domain whose boundary is piecewise analytic (with angles at the corners satisfying certain conditions). The asymptotic equation \eqref{3.11} will then have to be replaced by an analogous one for piecewise analytic boundary, see \cite{ST} and the references therein.

\begin{proof}[Proof of Theorem \ref{natural}]
We have that $E$ is the closure of a Jordan domain $G$ bounded by an analytic curve $L$ with exterior $\Omega.$ It is well known that the conformal mapping $\Phi:\Omega\to\Delta,\ \Phi(\infty)=\infty,\ \Phi'(\infty)>0,$ extends through $L$ into $G$, so that $\Phi$ maps a domain $\Omega_r$ containing $\overline{\Omega}$ conformally onto $\{|z|>r\}$ for some $r\in(0,1).$ In particular, the level curves of $\Phi$ denoted by $L_\rho$ are contained in $G$ for all $\rho\in(r,1)$, $L_1=L$ and $L_\rho\subset\Omega$ for $\rho>1.$ 

\vspace{0.1in}

\noindent For the proof that the series $\sum_{k=0}^{\infty}a_k B_k(z)$ converges a.s. to an analytic function $f,$ we refer the reader to Corollary $2.2$ of \cite{PR}.

\vspace{0.1in}

\noindent We now show the result about $L$ being the natural boundary of $f.$ We will give the proof for the basis of Faber and Bergman polynomials. The proof for the Szeg\H{o} polynomials is similar to the Bergman case but simpler. 

\vspace{0.1in}

\noindent Let
 $$\Phi(z) = \frac{z}{\text{cap}(E)} + \sum_{k=1}^{\infty}\frac{c_k}{z^k},$$ 
for $z$ in a neighborhood of infinity. Let $F_n$ be the $n$th Faber polynomial. By definition, $F_n$ is the polynomial part of the Laurent expansion of $\Phi^n$ at infinity, 

\begin{equation}\label{F_n}
\Phi^n(z) = F_n(z) + E_n(z),\hspace{0.1in} z\in\Omega_r,
\end{equation}
\noindent where $E_n$ is analytic, consisting of all the negative powers of $z$ in the expansion of $\Phi^n.$ Fix $\epsilon >0$ such that $r+\epsilon < 1.$ It follows that

$$E_n(z) = \frac{1}{2\pi i}\int_{\Gamma_{r+\epsilon}}\frac{\Phi^n(t)}{t - z}dt, \hspace{0.05in} z\in\Omega_{\rho},$$

\noindent for $r+\epsilon < \rho.$ From the above integral representation it is clear that 

\begin{equation}\label{estimateE}
|E_n(z)|\leq\dfrac{|\Gamma_{r+\epsilon}|(r+\epsilon)^n}{2\pi d(\Gamma_{r+\epsilon}, \Gamma_{\rho})}
\end{equation}

\noindent for $z\in\Omega_{\rho}.$ Here $d(\Gamma_{r+\epsilon}, \Gamma_{\rho})$ denotes the distance between $\Gamma_{r+\epsilon}$ and $\Gamma_{\rho}.$ Using \eqref{estimateE} and the fact that $\limsup |a_n|^{\frac{1}{n}} = 1$ a.s. (see equation \eqref{3.1} below), we deduce that the series $\sum_{k=0}^{\infty} a_kE_k(z)$ converges a.s. in $\Omega_{\rho}$ and defines a random analytic function there. From equation \eqref{F_n} we know that for $z\in G\cap\Omega_{\rho},$ $r+\epsilon < \rho < 1,$ 
$$\sum_{k=0}^{\infty}a_k F_k(z) = \sum_{k=0}^{\infty}a_k \Phi^k(z) - \sum_{k=1}^{\infty} a_kE_k(z).$$ 
Now suppose that the series $f = \sum_{k=0}^{\infty}a_k F_k(z)$ has an analytic continuation across $L = L_1.$ Then, together with the fact that the second series on the right defines an analytic function in $\Omega_{\rho},$ this implies that $\sum_{k=0}^{\infty}a_k w^k$ has an analytic continuation across $|w| = 1,$ where $w = \Phi(z).$ But this contradicts Satz $8$ of \cite{AR}.

\vspace{0.1in}

\noindent If $\{B_k\}_{k=0}^{\infty}$ denotes the Bergman basis, then Carleman's asymptotic formula (see \cite{Ga}, Chapter $1$), yields

\begin{equation}\label{B}
B_n(z) = \sqrt{\frac{n+1}{\pi}}\Phi^n(z)\Phi'(z)\left(1 + e_n(z)\right)
\end{equation}

\noindent where 
\begin{equation}\label{error1}
e_n(z)=\left\{\begin{array}{ll} O(\sqrt{n})r^n, & z\in L_{\rho}, \hspace{0.05in} \rho > 1\\
O(\frac{1}{\sqrt{n}})(\frac{r}{\rho})^n & z\in L_{\rho},\hspace{0.05in} r <\rho < 1.\end{array}\right.
\end{equation}

Using $\limsup |a_n|^{\frac{1}{n}} = 1$ a.s. and estimates \eqref{error1}, it is not hard to see that the series $\sum_{n=0}^{\infty}a_n\sqrt{\frac{n+1}{\pi}}\Phi^n(z)\Phi'(z)e_n(z)$ converges a.s. in a neighborhood of the boundary $L,$ and defines an analytic function there. Now from \eqref{B}, we have

$$\sum_{n=0}^{\infty}a_nB_n(z) = \sum_{n=0}^{\infty}a_n\sqrt{\frac{n+1}{\pi}}\Phi^n(z)\Phi'(z) + \sum_{n=0}^{\infty}a_n\sqrt{\frac{n+1}{\pi}}\Phi^n(z)\Phi'(z)e_n(z)$$

\noindent for $z\in G\cap\Omega_{\rho},$ $r < \rho < 1.$  If the series $\sum_{n=0}^{\infty}a_nB_n(z)$ has an analytic continuation across $L,$ then combined with the fact that the second series on the right defines an analytic function near $L,$ we would obtain that $\sum_{n=0}^{\infty}a_n\sqrt{\frac{n+1}{\pi}}\Phi^n(z)\Phi'(z)$ and hence $\sum_{n=0}^{\infty}a_n\sqrt{\frac{n+1}{\pi}}\Phi^n(z)$ has an analytic continuation across $L.$ In other words, taking $w = \Phi(z)$ the series $\sum_{n=0}^{\infty}a_n\sqrt{\frac{n+1}{\pi}}w^n$ has an analytic continuation across $|w| = 1.$ This contradicts Satz $12$ in \cite{AR}, and finishes the proof. 

\end{proof}

\section*{Acknowledgments} Research of the first author was partially supported by the National Security Agency (grant H98230-15-1-0229) and by the American Institute of Mathematics.

\bigskip
\noindent Department of Mathematics, Oklahoma State University, Stilwater, OK 74078, USA\\
\textit{igor@math.okstate.edu} (Pritsker),\\
\textit{koushik.ramachandran@okstate.edu} (Ramachandran)
\end{document}